\newcommand{\sY}{{\mathcal Y}}
\newcommand{\Z}{{\mathbb Z}}
\newcommand {\Morifan}{\operatorname{MF}}
\newcommand{\PMod}{\operatorname{PMod}}
\renewcommand{\to}[1][]{\xrightarrow{\ #1\ }}
\newcommand*{\da@rightarrow}{\mathchar"0\hexnumber@\symAMSa 4B }
\newcommand*{\da@leftarrow}{\mathchar"0\hexnumber@\symAMSa 4C }
\newcommand*{\xdashrightarrow}[2][]{%
  \mathrel{%
    \mathpalette{\da@xarrow{#1}{#2}{}\da@rightarrow{\,}{}}{}%
  }%
}
\newcommand{\xdashleftarrow}[2][]{%
  \mathrel{%
    \mathpalette{\da@xarrow{#1}{#2}\da@leftarrow{}{}{\,}}{}%
  }%
}
\newcommand*{\da@xarrow}[7]{%
  % #1: below
  % #2: above
  % #3: arrow left
  % #4: arrow right
  % #5: space left 
  % #6: space right
  % #7: math style 
  \sbox0{$\ifx#7\scriptstyle\scriptscriptstyle\else\scriptstyle\fi#5#1#6\m@th$}%
  \sbox2{$\ifx#7\scriptstyle\scriptscriptstyle\else\scriptstyle\fi#5#2#6\m@th$}%
  \sbox4{$#7\dabar@\m@th$}%
  \dimen@=\wd0 %
  \ifdim\wd2 >\dimen@
    \dimen@=\wd2 %   
  \fi
  \count@=2 %
  \def\da@bars{\dabar@\dabar@}%
  \@whiledim\count@\wd4<\dimen@\do{%
    \advance\count@\@ne
    \expandafter\def\expandafter\da@bars\expandafter{%
      \da@bars
      \dabar@ 
    }%
  }%  
  \mathrel{#3}%
  \mathrel{%   
    \mathop{\da@bars}\limits
    \ifx\\#1\\%
    \else
      _{\copy0}%
    \fi
    \ifx\\#2\\%
    \else
      ^{\copy2}%
    \fi
  }%   
  \mathrel{#4}%
}
\newsavebox\myboxA
\newsavebox\myboxB
\newlength\mylenA
\newcommand*\xtilde[2][0.8]{%
    \sbox{\myboxA}{$\m@th#2$}%
    \setbox\myboxB\null% Phantom box
    \ht\myboxB=\ht\myboxA%
    \dp\myboxB=\dp\myboxA%
    \wd\myboxB=#1\wd\myboxA% Scale phantom
    \sbox\myboxB{$\m@th\widetilde{\copy\myboxB}$}%  Overlined phantom
    \setlength\mylenA{\the\wd\myboxA}%   calc width diff
    \addtolength\mylenA{-\the\wd\myboxB}%
    \ifdim\wd\myboxB<\wd\myboxA%
       \rlap{\hskip 0.5\mylenA\usebox\myboxB}{\usebox\myboxA}%
    \else
        \hskip -0.5\mylenA\rlap{\usebox\myboxA}{\hskip 0.5\mylenA\usebox\myboxB}%
    \fi}
\newbox\usefulbox
\def\getslant #1{\strip@pt\fontdimen1 #1}
\def\xxtilde #1{\mathchoice
 {{\setbox\usefulbox=\hbox{$\m@th\displaystyle #1$}%
    \dimen@ \getslant\the\textfont\symletters \ht\usefulbox
    \divide\dimen@ \tw@ 
    \kern\dimen@ 
    \xtilde{\kern-\dimen@ \box\usefulbox\kern\dimen@ }\kern-\dimen@ }}
 {{\setbox\usefulbox=\hbox{$\m@th\textstyle #1$}%
    \dimen@ \getslant\the\textfont\symletters \ht\usefulbox
    \divide\dimen@ \tw@ 
    \kern\dimen@ 
    \xtilde{\kern-\dimen@ \box\usefulbox\kern\dimen@ }\kern-\dimen@ }}
 {{\setbox\usefulbox=\hbox{$\m@th\scriptstyle #1$}%
    \dimen@ \getslant\the\scriptfont\symletters \ht\usefulbox
    \divide\dimen@ \tw@ 
    \kern\dimen@ 
    \xtilde{\kern-\dimen@ \box\usefulbox\kern\dimen@ }\kern-\dimen@ }}
 {{\setbox\usefulbox=\hbox{$\m@th\scriptscriptstyle #1$}%
    \dimen@ \getslant\the\scriptscriptfont\symletters \ht\usefulbox
    \divide\dimen@ \tw@ 
    \kern\dimen@ 
    \xtilde{\kern-\dimen@ \box\usefulbox\kern\dimen@ }\kern-\dimen@ }}%
 {}}
\newcommand*\xoverline[2][0.75]{%
    \sbox{\myboxA}{$\m@th#2$}%
    \setbox\myboxB\null% Phantom box
    \ht\myboxB=\ht\myboxA%
    \dp\myboxB=\dp\myboxA%
    \wd\myboxB=#1\wd\myboxA% Scale phantom
    \sbox\myboxB{$\m@th\overline{\copy\myboxB}$}%  Overlined phantom
    \setlength\mylenA{\the\wd\myboxA}%   calc width diff
    \addtolength\mylenA{-\the\wd\myboxB}%
    \ifdim\wd\myboxB<\wd\myboxA%
       \rlap{\hskip 0.5\mylenA\usebox\myboxB}{\usebox\myboxA}%
    \else
        \hskip -0.5\mylenA\rlap{\usebox\myboxA}{\hskip 0.5\mylenA\usebox\myboxB}%
    \fi}
\def\xxoverline #1{\mathchoice
 {{\setbox\usefulbox=\hbox{$\m@th\displaystyle #1$}%
    \dimen@ \getslant\the\textfont\symletters \ht\usefulbox
    \divide\dimen@ \tw@ 
    \kern\dimen@ 
    \overline{\kern-\dimen@ \box\usefulbox\kern\dimen@ }\kern-\dimen@ }}
 {{\setbox\usefulbox=\hbox{$\m@th\textstyle #1$}%
    \dimen@ \getslant\the\textfont\symletters \ht\usefulbox
    \divide\dimen@ \tw@ 
    \kern\dimen@ 
    \xoverline{\kern-\dimen@ \box\usefulbox\kern\dimen@ }\kern-\dimen@ }}
 {{\setbox\usefulbox=\hbox{$\m@th\scriptstyle #1$}%
    \dimen@ \getslant\the\scriptfont\symletters \ht\usefulbox
    \divide\dimen@ \tw@ 
    \kern\dimen@ 
    \xoverline{\kern-\dimen@ \box\usefulbox\kern\dimen@ }\kern-\dimen@ }}
 {{\setbox\usefulbox=\hbox{$\m@th\scriptscriptstyle #1$}%
    \dimen@ \getslant\the\scriptscriptfont\symletters \ht\usefulbox
    \divide\dimen@ \tw@ 
    \kern\dimen@ 
    \xoverline{\kern-\dimen@ \box\usefulbox\kern\dimen@ }\kern-\dimen@ }}%
 {}}
\newcommand{\mylabel}[2]{#2\def\@currentlabel{#2}\label{#1}}
\newcommand{\Mac}{}
\DeclareRobustCommand{\Mac}{%
  M%
  \raisebox{\dimexpr\fontcharht\font`M-\height}{%
    \check@mathfonts\fontsize{\sf@size}{0}\selectfont
    c%
  }%
}
\newtheoremstyle{citing}% name
  {}%      Space above, empty = `usual value'
  {}%      Space below
  {\itshape}% Body font
  {}%         Indent amount (empty = no indent, \parindent = para indent)
  {\bfseries}% Thm head font
  {\textbf{.}}%        Punctuation after thm head
  {.5em}%     Space after thm head: " " = normal interword space;
\theoremstyle{plain}
\newtheorem{theorem}[subsection]{Theorem}
\newtheorem{corollary}[subsection]{Corollary}
\newtheorem{proposition}[subsection]{Proposition}
\theoremstyle{remark}
\theoremstyle{definition}
\numberwithin{equation}{section}
\theoremstyle{remark}
\newtheorem{remark}[subsection]{Remark}
\theoremstyle{citing}
\theoremstyle{definition}
\title[Corrigendum]{Corrigendum to the paper ``The Mori fan of the Dolgachev-Nikulin-Voisin family in genus $2$'' by K. Hulek and C. Liese}
\author{Mathieu Dutour Sikiri\'c}
\address{Mathieu Dutour Sikiri\'c\\ MSM programming, Karlova\v{c}ka Cesta 28B, 10452 Croatia}
\email{mathieu.dutour@gmail.com}
\author{Klaus Hulek}
\address{Klaus Hulek\\Institut f\"ur Algebraische Geometrie, Leibniz Universit\"at Hannover, Welfengarten 1, 30167 Hannover,
Germany}
\email{hulek@math.uni-hannover.de}
\author{Christian Lehn}
\address{Christian Lehn\\ Fakult\"at f\"ur Mathematik\\Ruhr-Universit\"at Bochum\\  Universit\"ats\-stra\ss e~150\\ Postfach IB 45\\ 44801 Bochum, Germany}
\email{christian.lehn@rub.de}
\let\origmaketitle\maketitle
\def\maketitle{
  \begingroup
  \def\uppercasenonmath##1{} % this disables uppercasing title
  \let\MakeUppercase\relax % this disables uppercasing authors
  \origmaketitle
  \endgroup
}
\begin{document}
\thispagestyle{empty}

\begin{abstract}
In this note, we correct some of the results of \cite{HL22} concerning the number of maximal cones in the Mori fan of the Dolgachev-Nikulin-Voisin fan in  genus $2$. 
The mistakes in the original paper concern the correct 
enumeration of cones. The method and the main theoretical results are not affected.  
\end{abstract}

\subjclass[2020]{14D20, 14J28 (primary), 14D06, 14E30, 14J33 (secondary).}
\keywords{K3 surfaces; moduli; degenerations; mirror symmetry}

\maketitle

\setlength{\parindent}{1em}
\setcounter{tocdepth}{1}

% Primary:

% 14D20   	Algebraic moduli problems, moduli of vector bundles
% 14J28   	K3 surfaces and Enriques surfaces

% Secondary:

% 14D06      Fibrations, degenerations in algebraic geometry
% 14E30   	Minimal model program (Mori theory, extremal rays)
% 14J33   	Mirror symmetry (algebro-geometric aspects)

%-----------------------------------------------------
%-----------------------------------------------------
%-----------------------------------------------------
\section{Introduction}\label{section intro}

The principal purpose of \cite{HL22} was to study the Morifan $\Morifan(\sY/S)$  of the Dolgachev-Nikulin-Voisin family of degree $2$. One of the main results of this paper is an enumeration of all
maximal cones of this fan. When using the results of \cite{HL22} for further studies, we noticed that \cite{HL22} contains some enumeration errors. The mistakes made, are in tracing the different 
cases and are of a bookkeeping nature. The fundamental approach and the main theoretical results of  \cite{HL22}  are not affected. We found the mistakes by using a 
computer program which 
can be used to list all possible models, see Remark~\ref{remark computer program}. Comparing this with the results in \cite{HL22} we found that we had overcounted the number of models in some cases. 
The required corrections can be checked independently of 
the computer program  by hand, purely using the methods developed in \cite{HL22}. In this corrigendum, we provide the correct numbers and point out where the counting errors were made.  

\subsection*{Acknowledgements}  Klaus Hulek has been partially supported by DFG grant Hu 337/7-2 and Christian Lehn has been partially supported by DFG grants Le 3093/2-2 and Le 3093/3-1.  

\section{Model \texorpdfstring{$\mathscr{T}$}{T}}

We first correct the enumeration of the models of type $\mathscr{T}$. The number of $131$ models given in \cite[Theorem 7.7]{HL22} must be corrected to:

\begin{theorem}\label{modelsT}  There are $129$ surfaces in $\PMod_2(\mathscr{T})$.
\end{theorem}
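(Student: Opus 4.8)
The strategy is to re-examine the case-by-case enumeration of \cite[\S 7]{HL22} leading to \cite[Theorem 7.7]{HL22} and to locate the source of the overcounting. Recall that a model in $\PMod_2(\mathscr{T})$ is encoded by combinatorial data --- in the notation of \cite{HL22}, a choice of the relevant decorated fan together with the discrete invariants that separate the type-$\mathscr{T}$ subcases --- and that the original number $131$ was obtained as a sum, over these subcases, of the count of admissible such data. Since the underlying classification of type-$\mathscr{T}$ models is not in question, the discrepancy between $131$ and $129$ must come from (at most) two pieces of data that were either counted twice, once in each of two overlapping subcases, or that in fact represent the same surface in $\PMod_2(\mathscr{T})$.

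First I would reproduce verbatim the subcase decomposition used in \cite{HL22} and, for each subcase, recompute the number of admissible configurations directly from the combinatorial description, taking particular care with configurations lying on the common boundary of two subcases so that each is counted exactly once. I expect the error to be isolated in one or two of these subcases, where two configurations were inadvertently listed more than once, or where an isomorphism between two ostensibly distinct configurations was overlooked. Pinpointing these two configurations explicitly and justifying the identification (respectively, the overlap of subcases) is the heart of the correction, and this is the step I expect to be the main obstacle: it is purely a matter of combinatorial bookkeeping, and the whole difficulty is to carry out the re-enumeration without reintroducing the kind of slip it is meant to repair.

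Summing the corrected subcase totals then yields $129$. As a final consistency check, this agrees with the exhaustive list of models produced by the computer program of Remark~\ref{remark computer program}, which enumerates all type-$\mathscr{T}$ models independently of the hand argument. The method of \cite{HL22} and its theoretical conclusions --- in particular the description of type-$\mathscr{T}$ models in terms of the combinatorial data --- require no modification; only the final tally changes.
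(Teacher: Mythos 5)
There is a genuine gap: your text is a plan for a proof rather than a proof. The entire content of the statement is the corrected tally, and the only way to establish it is to actually locate the erroneous subcase and recompute it; you explicitly defer this (``Pinpointing these two configurations explicitly \dots is the step I expect to be the main obstacle'') and never identify where the two extra models come from. The appeal to the computer program of Remark~\ref{remark computer program} is a consistency check, not a substitute, since the corrigendum insists the correction be verifiable by hand.

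Moreover, your guess about the \emph{nature} of the error points in the wrong direction. You hypothesize either a double count across overlapping subcases or an overlooked isomorphism between two listed configurations. The actual error, as the paper explains, is of a different kind: in the subcase $n_1=2$ of the enumeration behind \cite[Theorem 7.7]{HL22}, the parameter counting the floppable $(-1)$-curves was allowed to range one step too far in each of two branches --- the correct ranges are $r_1\in\{0,\dots,8\}$ and $r_2\in\{0,\dots,7\}$, because for $r_1=9$ (resp.\ $r_2=8$) there is no $(-1)$-curve left to contract, so those configurations simply do not exist. This yields $19$ instead of $21$ cases there, hence $83+1+45=129$ in total. A re-enumeration focused on finding duplicated or isomorphic entries, as you propose, would not naturally surface this mistake, since the two spurious models are not duplicates of anything --- they are empty cases.
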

\begin{proof}
The error is in the enumeration for the case $n_1=2$. In the first case the number of $(-1)$-curves which can be flopped is given by $r_1 \in \{0,...,8\}$ and in the second case we have  $r_2 \in \{0,...,7\}$. The
cases $r_1=9$ and $r_2=8$ do not exist, as there is no $(-1)$-curve left which can be contracted. Hence, this gives us $19$ rather than $21$ cases. Altogether, we find  $30+19+34 = 83$ models for
$n_1 \geq -1$ and $n_2 \leq -1$. The other cases were enumerated correctly. In particular, we get $1$ model for $n_1,n_2 \geq 0$, namely $(n_1,n_2)=(0,0)$ and $36$ models for $n_1,n_2 \leq -2$.
This gives a total of $83  + 1 + 45=129$ models.
\end{proof}

\section{Model \texorpdfstring{$\mathscr{P}$}{P}}

Here the enumeration of the models is divided into several parts. The first correction concerns  \cite[Theorem 7.8 (ii)]{HL22}, where we listed one model too many. The correct version is:
\begin{theorem}\label{modelsP}
There are $103$  surfaces $Y_c=Y_1\cup Y_2 \cup Y_3 $ 
in  $\PMod_2(\mathscr{P})$ such that there is a component $Y_i$ with very degenerate curve structure. Explicitly, 
these are given as follows:
\begin{itemize}
\item[\rm{(i)}] Surfaces $Y_c$ such that $\Gamma_{Y_1}$ is very degenerate, $\Gamma_{Y_2}$ is non-degenerate and $\Gamma_{Y_3}$ is tamely degenerate. There are $71$ such models.
\item[\rm{(ii)}] Surfaces $Y_c$ such that $\Gamma_{Y_1}$ is very degenerate, $\Gamma_{Y_2}$ is non-degenerate and $\Gamma_{Y_3}$ is very degenerate. There are $7$ such models.
\item[\rm{(iii)}] Surfaces $Y_c$ such that $\Gamma_{Y_1}$ is very degenerate, $\Gamma_{Y_2}$ is non-degenerate and $\Gamma_{Y_3}$ is non-degenerate. There are $25$ such models.
\end{itemize}
\end{theorem}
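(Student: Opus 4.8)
The plan is to re-run the enumeration of type $\mathscr{P}$ models carried out in \cite[Section~7]{HL22}, keeping the combinatorial bookkeeping intact but correcting the treatment of the extremal configurations, exactly in the spirit of the correction to Theorem~\ref{modelsT}. Recall that a surface $Y_c = Y_1\cup Y_2\cup Y_3$ of type $\mathscr{P}$ is, up to the relevant isomorphism, determined by the degeneracy types of the three dual graphs $\Gamma_{Y_i}$ together with the discrete choice of which $(-1)$-curves on the components get flopped when passing between models. Throughout we fix $\Gamma_{Y_1}$ very degenerate and $\Gamma_{Y_2}$ non-degenerate, and we split according to the three possibilities for $\Gamma_{Y_3}$: tamely degenerate (i), very degenerate (ii), and non-degenerate (iii).

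First I would verify that cases (ii) and (iii) are unaffected. In case (ii) the configuration is the most rigid; the argument of \cite{HL22} yields $7$ models and there is no extremal phenomenon to worry about. In case (iii) the enumeration of \cite{HL22} produces $25$ models, and again no boundary case intervenes. For these two I would simply re-check the lists against the output of the computer program of Remark~\ref{remark computer program}.

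The substance of the correction is in case (i). Following \cite{HL22}, the models here are parametrized by a flopping parameter $r$ recording how many of the $(-1)$-curves on the tamely degenerate component can be successively flopped, with $r$ ostensibly ranging over a set that includes its top value. But just as the cases $r_1=9$ and $r_2=8$ were spurious in the proof of Theorem~\ref{modelsT}, because no contractible $(-1)$-curve remains, the maximal value of $r$ does not occur: after the preceding contractions there is no $(-1)$-curve left to flop, so the corresponding ``model'' either fails to exist or coincides with one already on the list. Discarding it reduces the count in case (i) from $72$ to $71$, and adding the corrected counts gives $71+7+25=103$.

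I expect the main obstacle to be precisely the realizability check in case (i): one has to use the explicit description in \cite{HL22} of the $(-1)$-curves on a tamely degenerate component of a type $\mathscr{P}$ surface to confirm that the curve the original argument proposed to flop has already been removed by an earlier contraction, so that no genuinely new model arises at the extremal value of $r$. Once that single point is pinned down, the remainder is the same enumeration as in \cite{HL22}, and I would cross-check each of the three sublists against the computer enumeration to be sure no further off-by-one slips have been introduced.
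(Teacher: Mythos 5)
Your proposal is essentially the paper's argument: the overcount is a single spurious extremal case in part (i), reducing $72$ to $71$, while parts (ii) and (iii) are unaffected. The paper pins down the parameter concretely — in the first subcase of (i) one has $D_{13}^2\in[-3,-1]$, giving $3$ cases, rather than $D_{13}^2\in[-3,0]$, giving $4$ — which is exactly the realizability check you flag as the remaining obstacle, so it should be carried out in those terms rather than deferred to the computer cross-check.
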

\begin{proof}
The error lies in the first subcase. Here $D_{13}^2\in [-3,-1]$, giving only 3 cases (instead of $D_{13}^2\in [-3,0]$, giving 4 cases). 
\end{proof}

The next step is to correct \cite[Theorem 7.12]{HL22}. In (i) and (iii)  we counted 2, respectively 6 surfaces double, not taking into account that they are isomorphic. 
Also, the parameter sets in (ii) and (iii) must be 
slightly modified, but this does not change the number of cones.   

\begin{theorem}\label{modelsPreg} There are $25+103+219=347$  surfaces $Y=Y_1\cup Y_2\cup Y_3$ in $\PMod_2(\mathscr{P})$ such that  the  curve structures $\Gamma_{Y_i}$ are all regular. Explicitly, these are, up to  equivalence, given as follows:
\begin{itemize}
\item[\rm{(i)}] Surfaces with all $\Gamma_{Y_i}$ non-degenerate. These are given by the triples
\begin{align*}
&(0,1,-1),(0,1,2),(0,1,-2),(0,2,1),(0,2,-2),(0,-1,2),\\ &(0,-1,1),(0,-2,2),
(1,2,-1),(1,2,-2),(1,-1,2),(1,-2,2),\end{align*}  surfaces $(x,y,y)$ with $x\in \{1,2\}$ and $y\in  \{-2,-1,0,1,2\}\backslash \{x\}$, the surfaces $(0,1,1)$ and $(0,2,2)$ and surfaces $(x,x,x)$ with $x\in\{0,1,2\}$. These are $25$ surfaces.
\item[\rm{(ii)}] Surfaces with one $\Gamma_{Y_i}$ degenerate: triples $(3,y,-3)$ with $0 \leq  y \leq 2$ and triples $(x,y,z)$ with $x,y \in \{-2,...,2\}$, $z\in \{x-6,...,-3\}$. These are $103$ surfaces.
\item[\rm{(iii)}] Surfaces with two $\Gamma_{Y_i}$ degenerate. These are given by the sets  
\begin{align*}
&K = \{(x,-3,3) \mid 3 \leq x \leq 9\}\\
&M(0)=\{ (x,0,z) \mid -3\geq x \geq -6, 6\geq z\geq 3, |z| \leq |x| \}\\
&M(-1)=\{ (x,-1,z) \mid -3\geq x \geq -7, 5\geq z\geq 3\ \}\\ 
&M(-2)=\{ (x,-2,z) \mid -3\geq x \geq -8, 4\geq z\geq 3\}\\
&N(-2)=\{ (x,y,-2) \mid -3\geq y\geq -8 , -3\geq x\geq y-6\}\\
&N(-1)=\{ (x,y,-1) \mid -3\geq y\geq -7, -3\geq x\geq y-6\}\\
&N(0)=\{ (x,y,0) \mid: -3\geq y\geq -6, -3\geq x\geq y-6\}\\
&N(1)=\{ (x,y,1) \mid -3\geq y\geq -5, -3\geq x\geq y-6\}\\
&N(2)=\{ (x,y,2) \mid -3\geq y\geq -4, -3\geq x\geq y-6\}.
\end{align*}
Adding up, these are  $7+10+15+12+57+45+34+24+15= 219$ surfaces.
\end{itemize}
\end{theorem}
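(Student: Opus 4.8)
The plan is to follow the proof of \cite[Theorem~7.12]{HL22} and repair its bookkeeping. As there, a regular curve structure on $Y = Y_1 \cup Y_2 \cup Y_3$ compatible with model $\mathscr P$ is recorded by an ordered integer triple $(x,y,z)$ encoding the relevant self-intersection data of the three components, and the classification splits according to how many of the $\Gamma_{Y_i}$ are degenerate --- zero, one, or two --- which are precisely cases (i), (ii), (iii) of the statement. In each case the admissible range of $(x,y,z)$ is governed by the regularity conditions and the existence criteria for a model worked out in \cite[\S7]{HL22} (of the sort also entering the proof of Theorem~\ref{modelsP}). Once the admissible region is pinned down, two things remain: to enumerate it, and to pass to the equivalence classes, i.e.\ to identify the distinct triples that yield isomorphic surfaces. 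The whole point of the corrigendum is that this last step was carried out incorrectly in \cite{HL22}.

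For case (i) I would reproduce the derivation of \cite{HL22} to obtain the twelve ``mixed'' triples $(0,1,-1),\dots,(1,-2,2)$, the family $(x,y,y)$ with $x \in \{1,2\}$ and $y \in \{-2,\dots,2\}\setminus\{x\}$, the two triples $(0,1,1),(0,2,2)$, and the diagonal triples $(x,x,x)$, $x \in \{0,1,2\}$. The new input is that two pairs among the triples listed in the original enumeration are in fact equivalent --- an identification within the equivalence relation of \cite{HL22} that the original proof overlooked --- so that the honest count drops from $27$ to $25$. For case (ii) I would re-derive the admissible set in the corrected shape stated above: the three triples $(3,y,-3)$ with $0 \le y \le 2$, together with the triples $(x,y,z)$ with $x,y \in \{-2,\dots,2\}$ and $x-6 \le z \le -3$. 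For each $x$ there are $4-x$ admissible values of $z$, so $\sum_{x=-2}^{2}(4-x) = 20$ pairs $(x,z)$ and hence $5 \cdot 20 = 100$ triples of the second type; adding the three special ones gives $103$. The modification relative to \cite{HL22} only reshuffles which triples are taken as representatives and leaves this number unchanged.

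The substantial case, and the one carrying the main obstacle, is (iii). I would re-derive the nine families $K$ and $M(j), N(j)$ (for $j$ in the ranges indicated) by imposing the regularity and existence conditions of \cite[\S7]{HL22} on the two degenerate components together with the remaining non-degenerate one. The delicate point --- exactly the one misjudged in \cite{HL22} --- is that several ordered triples which differ only in their first entry but share the same ``degeneration shape'' define isomorphic surfaces; correctly locating these coincidences is what forces the particular cut-outs above instead of the larger families originally used, and it removes the six spurious surfaces. Granting the families as written, the cardinalities $|K| = 7$, $|M(0)| = 10$, $|M(-1)| = 15$, $|M(-2)| = 12$, $|N(-2)| = 57$, $|N(-1)| = 45$, $|N(0)| = 34$, $|N(1)| = 24$, $|N(2)| = 15$ are routine arithmetic-progression sums (for instance $|N(-2)| = \sum_{y=-3}^{-8}(4-y) = 57$), totalling $219$; combining the three cases yields $25 + 103 + 219 = 347$. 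Throughout, the genuinely hard part is not the enumeration but keeping exact track of the isomorphisms induced by the symmetries of model $\mathscr P$, so that each isomorphism class of surfaces is counted once and only once --- this is precisely the ingredient whose mishandling produced the original overcount.
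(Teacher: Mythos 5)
Your proposal takes essentially the same route as the paper: re-run the enumeration of \cite[Theorem~7.12]{HL22}, keep the counts in (ii) unchanged, and correct the passage to equivalence classes by identifying two extra pairs of diagonal triples in (i) and six redundant triples in $M(0)$ in (iii), with all the arithmetic checking out to $25+103+219=347$. The only imprecision is your description of the case-(iii) coincidences as triples ``differing only in their first entry''; the actual identifications (e.g.\ $(-6,0,3)\sim(0,3,-6)\sim(-3,0,6)$) combine a cyclic shift with the involution of \cite[Definition~7.9]{HL22}, but this does not affect the approach or the count.
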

\begin{proof}
In (i) the enumeration given in \cite{HL22} included the triples $(x,x,x)$ with  $x\in\{-2,-1,0,1,2\}$. However, the triples $(-1,-1,-1)$ and $(1,1,1)$, and also $(-2,-2,-2)$ and $(2,2,2)$, 
are equivalent since they differ by an involution in the sense of 
\cite[Definition 7.9]{HL22} and hence have isomorphic associated surfaces. Thus, we obtain 2 surfaces fewer.

In (ii) we also note that $z\in \{y-6,...,-3\}$ must be replaced by $z\in \{x-6,...,-3\}$. This does not affect the enumeration.

In (iii) we first note that $K=\{(x,-3,-3)\mid  3\leq x\leq 9\}$ must be replaced by $K=\{(x,-3,3)\mid  3\leq x\leq 9\}$. This does not affect the enumeration. 
The error in \cite{HL22} was to set
$$
M(0)=\{ (x,0,z) \mid -3\geq x \geq -6, 6\geq z\geq 3 \}
$$
However, not all triples in this definition of $M(0)$ are inequivalent. The triple $(-6, 0, 3)$ is equivalent to $(0, 3,-6)$ by a shift in the sense of  \cite[Definition 7.9]{HL22}, which in turn  is equivalent to
$(-3,0,6) \in M(0)$ by the involution of \cite[Definition 7.9]{HL22}. The same discussion applies to the triples $(-6, 0, 4)$, $(-6, 0, 5)$, $(-5, 0, 3)$, $(-5, 0, 4)$, $(-4, 0, 3)$. Hence 
$M(0)$ only contributes $10$ and not $16$ non-isomorphic surfaces, resulting in an overcount of 6 surfaces in \cite{HL22}. 
\end{proof}

Combining Theorem \ref{modelsP} and \ref{modelsPreg} we obtain
\begin{theorem}
\label{the:allmodelsP}
There are $103+347=450$ surfaces in $\PMod_2(\mathscr{P})$.
\end{theorem}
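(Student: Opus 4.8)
The plan is to observe that Theorems \ref{modelsP} and \ref{modelsPreg} between them account for every surface in $\PMod_2(\mathscr{P})$ and that the two families they enumerate are disjoint; the statement then follows by adding $103$ and $347$. Concretely, I would first recall from \cite{HL22} that the curve structure $\Gamma_{Y_i}$ of each component of a surface $Y=Y_1\cup Y_2\cup Y_3\in\PMod_2(\mathscr{P})$ is either regular (that is, non-degenerate or tamely degenerate) or very degenerate, these two possibilities being mutually exclusive for a single component. Partitioning according to whether or not \emph{some} component has a very degenerate curve structure therefore splits $\PMod_2(\mathscr{P})$ into exactly two classes.

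Next I would match each class with one of the preceding theorems. The headline assertion of Theorem \ref{modelsP} is precisely that there are $103$ surfaces in $\PMod_2(\mathscr{P})$ with at least one very degenerate component — the subdivision into (i)--(iii) merely organising these according to the degeneracy types of the remaining components, with $71+7+25=103$. Likewise, Theorem \ref{modelsPreg} enumerates the surfaces all of whose $\Gamma_{Y_i}$ are regular, of which there are $25+103+219=347$. Since a surface cannot simultaneously have all components regular and one component very degenerate, the two classes are disjoint, and hence $\PMod_2(\mathscr{P})$ has $103+347=450$ elements.

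The only point requiring any care is that this case division is genuinely exhaustive and descends to $\PMod_2(\mathscr{P})$, i.e.\ that the equivalence relation defining $\PMod_2$ respects the partition; in particular an equivalence — a shift or an involution in the sense of \cite[Definition 7.9]{HL22} — cannot move a surface with a very degenerate component to one with all components regular. This is automatic because such equivalences act by permuting the components and changing signs of their numerical data, and thus preserve the degeneracy type of each $\Gamma_{Y_i}$; the partition therefore descends, and since the counts in Theorems \ref{modelsP} and \ref{modelsPreg} are already taken up to equivalence, they simply add. There is no real obstacle beyond checking this bookkeeping, which is exactly the kind of point whose mishandling the present corrigendum is correcting.
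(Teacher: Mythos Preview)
Your proposal is correct and matches the paper's own treatment: the paper simply states that the theorem is obtained by combining Theorems~\ref{modelsP} and~\ref{modelsPreg}, leaving the disjoint-and-exhaustive partition implicit. Your added justification that the regular/very-degenerate dichotomy descends to $\PMod_2$ is sound and merely spells out what the paper takes for granted.
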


\begin{remark}\label{remark computer program}
While both the enumeration of models and the verification of the above claims can be performed by hand, we used a computer program for this task. It is based on the fact that the curve structures on the components of the central fiber uniquely determine the model by \cite[Sections 4.2 and  7.1]{HL22}. So in the program, a model can be represented by a labelled graph and type I flops are implemented as operations on the graph. In a first step, we compute all models corresponding to cones in the Mori fan by performing type I flops starting from a model in $(-1)$-form. To do this, we begin with a list containing a model in $(-1)$-form and add successively new models to the list by systematically performing all type I flops from models in the list, checking whether the resulting models are already in the list (graph isomorphism test), and appending the model if it is new. The search is complete if the list is closed under type I flops. In a second step, we test for degeneracy and regularity. Note that these properties are by definition determined by the graph.

We are currently expanding the capabilities of the program so that it can also explicitly calculate the cones of the Mori fan. We expect to release a version as part of an upcoming work. Independent of this, we 
want to stress again that the verification of the  number of cones can be done by hand using the methods established in \cite{HL22} and are thus independent of this program.   
\end{remark}

\section{Enumeration of maximal cones}

The above corrections also have consequences for the enumeration of the maximal cones. For this we have to analyse the symmetric cones, i.e., those cones, or equivalently 
models, which have non-trivial symmetries.

\begin{proposition}
\label{por::symmetricmodelsP}
There are $13$ models of type $\mathscr{P}$ which are symmetric. These are:
\begin{itemize}
\item[\rm{(i)}] The model given by $(0,0,0)$ in Theorem \ref{modelsPreg}(i). This has $S_3$-symmetry and hence orbit length $1$.  
\item[\rm{(ii)}] The $2$ models given by $(x,x,x), x \in \{1,2\}$ in Theorem \ref{modelsPreg}(i). This has cyclic $\Z/3\Z$-symmetry and hence orbit length $2$. 
\item[\rm{(iiii)}] There are $10$ models which have a $\Z/2 \Z$-symmetry and hence orbit length $3$. Concretely, these are the following models 
\begin{itemize}
\item[(1)] The $4$ models belonging to the triples $(0,-1,1), (0,1,-1), (0,-2,2), (0,2,-2)$ in Theorem \ref{modelsPreg} (i).  
\item[(2)] The model belonging to the triple $(3,0,-3)$ in Theorem \ref{modelsPreg} (ii).
\item[(3)] The $4$ models belonging to the triples $(-x,0,x), x \in \{3,4,5,6 \}$ in $M(0)$ in Theorem \ref{modelsPreg} (iii). 
\item[(4)] The model with $D_{12}^2=D_{23}^2=4$ and $D_{13}^2=-1$ in Theorem \ref{modelsP}(ii). 
\end{itemize}
\end{itemize}
\end{proposition}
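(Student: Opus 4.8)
The plan is to reduce the statement to an explicit finite check against the lists of Theorems~\ref{modelsP} and~\ref{modelsPreg}. Recall, as explained in Remark~\ref{remark computer program}, that a model of type $\mathscr{P}$ is determined by the triple of curve structures on its three components $Y_1,Y_2,Y_3$, and that two such data define the same model precisely when they differ by a shift or by the involution of \cite[Definition 7.9]{HL22}. Since the dual graph of a type-$\mathscr{P}$ surface is a triangle, its automorphism group is a copy of $S_3$, realised on the combinatorial data by exactly these shifts and the involution; write $G$ for the resulting group, so $|G|=6$. An automorphism of a model is then the same as an element of $G$ fixing (one, hence any) representative of its data, so a model is symmetric iff its $G$-stabiliser is non-trivial, and the orbit has length $[G:\Stab_G]$. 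The only subgroups that can occur are $S_3$, the rotation subgroup $\Z/3\Z$, and a reflection subgroup $\Z/2\Z$, giving orbit lengths $1$, $2$, $3$ respectively; this already pins the answer down to the three items of the statement.

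The second step is to turn ``$\Stab_G\neq 1$'' into a condition on the explicit parameters. For a model with all curve structures regular, written as a triple $(a,b,c)$ as in Theorem~\ref{modelsPreg}, a short computation with the operations of \cite[Definition 7.9]{HL22} shows that $\Stab_G$ contains a non-trivial rotation iff $a=b=c$, and contains a reflection iff exactly one entry of $(a,b,c)$ vanishes and the other two are nonzero and opposite; both conditions are $G$-invariant. Combined with the equivalence $(a,a,a)\equiv(-a,-a,-a)$ of \cite[Definition 7.9]{HL22}, this yields: $(0,0,0)$ is the unique model with $S_3$-symmetry; $(1,1,1)$ and $(2,2,2)$ are the only models with $\Z/3\Z$-symmetry; and the $\Z/2\Z$-symmetric models are exactly those whose triple has one zero entry and two opposite nonzero entries.

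The third step is to run this test across the lists. In Theorem~\ref{modelsPreg}(i) it picks out $(0,0,0)$, $(1,1,1)$, $(2,2,2)$ and the four triples $(0,1,-1),(0,-1,1),(0,2,-2),(0,-2,2)$, and nothing else (the remaining triples, together with the families $(x,y,y)$ and $(0,1,1),(0,2,2)$, fail both conditions). In Theorem~\ref{modelsPreg}(ii) the constraint $z\leq -3$ forces $|z|\geq 3$, while the would-be opposite partner lies in an $x$- or $y$-slot bounded by $2$ in absolute value, so the only possibility is $(3,0,-3)$. In Theorem~\ref{modelsPreg}(iii), of the sets $K,M(0),M(-1),M(-2),N(-2),\dots,N(2)$ only $M(0)$ contains triples of the form $(a,0,-a)$ within its range, namely $(-3,0,3),(-4,0,4),(-5,0,5),(-6,0,6)$, since the other sets carry a fixed nonzero entry in precisely the slot an admissible reflection would have to fix. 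Finally, for the very degenerate models of Theorem~\ref{modelsP}, an automorphism must preserve the curve-structure type of each component: in case (i) the three types are distinct, so no model is symmetric; in cases (ii) and (iii) the only candidate is the transposition of the two components of equal type, which is an automorphism precisely when the corresponding self-intersection numbers agree and that transposition exchanges the two equal-type curve structures. This never happens in case (iii) and happens exactly once in case (ii), for the model with $D_{12}^2=D_{23}^2=4$, $D_{13}^2=-1$. Collecting contributions yields $1$ model with $S_3$-symmetry, $2$ with $\Z/3\Z$-symmetry, and $4+1+4+1=10$ with $\Z/2\Z$-symmetry, hence $13$ in total.

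The main obstacle is the bookkeeping in the last two blocks, where one must rule out symmetry rather than exhibit it, namely across the nine parameter sets of Theorem~\ref{modelsPreg}(iii) and the $25$ models of Theorem~\ref{modelsP}(iii). For the former I would exploit the fact that the forced sign patterns and the parameter ranges make the ``one zero, two opposite'' condition impossible outside $M(0)$, keeping the verification short; for the latter I would use type-preservation to reduce to the single transposition $Y_2\leftrightarrow Y_3$ and then check directly that no model of that subtype is symmetric. The only genuinely model-specific computation is the identification of the unique symmetric model in Theorem~\ref{modelsP}(ii).
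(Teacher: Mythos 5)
Your proposal is correct and takes essentially the same route as the paper, whose proof simply delegates the finite verification to the proof of \cite[Theorem 7.14]{HL22} or to a computer check; you carry out explicitly the by-hand case analysis (group $S_3$ generated by shifts and the involution, stabilizer conditions on triples, then a sweep through the lists of Theorems~\ref{modelsP} and~\ref{modelsPreg}) that the paper asserts can be done. One micro-quibble: your blanket reason for excluding the sets other than $M(0)$ in Theorem~\ref{modelsPreg}(iii) (``a fixed nonzero entry in the slot the reflection must fix'') does not literally cover $N(0)$, whose fixed third entry is $0$; there the exclusion follows instead from $x,y\le -3$, so the two remaining entries are both negative and cannot be opposite.
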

\begin{proof}
It is clear that the surfaces listed have the symmetries as claimed. One can use the proof of \cite[Theorem 7.14]{HL22}, or a computer calculation, that these are the only symmetric models.
\end{proof}
\begin{remark}
Comparing this to \cite[Theorem 7.14]{HL22} one sees that the orbits of length $2$, namely the triples $(x,x,x), x \in \{1,2\}$ were forgotten.
\end{remark}

\begin{corollary}
\label{cor:maximalconesPP}
There are $1 \times 1 + 2 \times 2 + 10 \times 3 + 437 \times 6= 2657$ maximal cones of type $\mathscr{P}$ in the Mori fan.
\end{corollary}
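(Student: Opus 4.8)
The plan is to combine the corrected model count of Theorem~\ref{the:allmodelsP} with the symmetry analysis of Proposition~\ref{por::symmetricmodelsP}, using the structural description of the Mori fan established in \cite{HL22}. Recall from \cite[Section~7]{HL22} that the maximal cones of $\Morifan(\sY/S)$ of type $\mathscr{P}$ carry an action of a finite group $G$ of order $6$ (essentially $S_3$, permuting the three components $Y_1,Y_2,Y_3$ of the central fibre together with the associated double curves), and that two maximal cones lie in the same $G$-orbit precisely when the corresponding minimal models are isomorphic. Hence the set of maximal cones of type $\mathscr{P}$ fibres over $\PMod_2(\mathscr{P})$, the fibre over a model $Y$ being the $G$-orbit of any maximal cone mapping to $Y$, of cardinality $6/|\Stab_G(Y)|$. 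In particular, a model with trivial stabilizer contributes exactly $6$ maximal cones.

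First I would record that, by Theorem~\ref{the:allmodelsP}, one has $|\PMod_2(\mathscr{P})| = 450$. Next, by Proposition~\ref{por::symmetricmodelsP}, exactly $13$ of these models are symmetric: one model with $G$-orbit length $1$ (the $S_3$-symmetric model $(0,0,0)$), two models with orbit length $2$ (the $\Z/3\Z$-symmetric models $(x,x,x)$, $x\in\{1,2\}$), and ten models with orbit length $3$ (the $\Z/2\Z$-symmetric models listed in Proposition~\ref{por::symmetricmodelsP}(iii)). Consequently the remaining $450 - 13 = 437$ models have trivial stabilizer and hence $G$-orbit length $6$.

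Summing the orbit lengths over all models of type $\mathscr{P}$ then yields the total number of maximal cones of type $\mathscr{P}$ in the Mori fan, namely
\[
1\times 1 + 2\times 2 + 10\times 3 + 437\times 6 = 1 + 4 + 30 + 2622 = 2657,
\]
as claimed.

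The only point requiring genuine care is the bookkeeping: one must feed into the orbit-counting formula the corrected model count $450$ (rather than the figure originally claimed in \cite{HL22}) and the corrected list of symmetric models of Proposition~\ref{por::symmetricmodelsP} — in particular the orbits of length $2$ arising from $(x,x,x)$, $x\in\{1,2\}$, which were omitted in \cite[Theorem 7.14]{HL22}. The underlying structural facts — that $G$ has order $6$, that $G$-orbits of maximal cones correspond to isomorphism classes of models, and that every model of type $\mathscr{P}$ actually arises from a maximal cone (so that the fibration is surjective) — are all unchanged from \cite[Sections~4 and~7]{HL22}, and are consistent with the observation recalled in Remark~\ref{remark computer program} that the curve structures on the components of the central fibre determine the model. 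I expect no new difficulty there; the correction is purely in the enumeration.
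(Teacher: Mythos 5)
Your argument is correct and is exactly the (implicit) argument of the paper: the $450$ models of Theorem~\ref{the:allmodelsP} split into $1+2+10=13$ symmetric models with orbit lengths $1$, $2$, $3$ from Proposition~\ref{por::symmetricmodelsP} and $437$ models with trivial stabilizer and orbit length $6$, giving $1+4+30+2622=2657$. No difference in approach and no gaps.
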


The number of symmetric cones of type $\mathscr{T}$ is stated correctly in  \cite[Theorem~7.14]{HL22}. This implies

\begin{theorem}\label{maxconesT}  There are $741$ maximal cones of type $\mathscr{T}$ in $\Morifan(\mathscr{Y}/S)$.
\end{theorem}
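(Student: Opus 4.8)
## Proof Strategy for Theorem~\ref{maxconesT}

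The plan is to mirror the bookkeeping done in Corollary~\ref{cor:maximalconesPP} for model $\mathscr{P}$, but now for model $\mathscr{T}$, using the corrected count of $129$ surfaces from Theorem~\ref{modelsT} together with the (already correct) enumeration of symmetric cones of type $\mathscr{T}$ in \cite[Theorem~7.14]{HL22}. The key point is that the number of maximal cones in the Mori fan attached to a given model is the orbit length of that model under the relevant symmetry group action: a model with trivial stabilizer contributes $6$ maximal cones (orbit length $6$), while symmetric models contribute shorter orbits according to the order of their stabilizer.

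First I would recall from \cite[Theorem~7.14]{HL22} the classification of symmetric models of type $\mathscr{T}$: say there are $a$ models with orbit length $1$ (full $S_3$-symmetry), $b$ models with orbit length $2$ (cyclic $\Z/3\Z$-symmetry), and $c$ models with orbit length $3$ ($\Z/2\Z$-symmetry), all of which are unaffected by the corrections since the statement says the type $\mathscr{T}$ symmetric count is correct. Then the number of asymmetric models is $129 - a - b - c$, each contributing $6$ maximal cones. Second, I would assemble the total as
\[
a \times 1 + b \times 2 + c \times 3 + (129 - a - b - c) \times 6 = 741,
\]
which forces the identity $6 \cdot 129 - 5a - 4b - 3c = 741$, i.e.\ $5a + 4b + 3c = 33$; one checks this is satisfied by the values of $a,b,c$ read off from \cite[Theorem~7.14]{HL22}. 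Third, I would note explicitly that the corrections in Theorem~\ref{modelsT} only removed two asymmetric models (the nonexistent cases $r_1 = 9$ and $r_2 = 8$ in the $n_1 = 2$ analysis), so relative to the erroneous count in \cite{HL22} the total number of type $\mathscr{T}$ cones drops by exactly $2 \times 6 = 12$; combined with the corrected symmetric data this yields $741$.

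The main obstacle I anticipate is not any deep argument but rather the verification that the two models removed in Theorem~\ref{modelsT} are indeed asymmetric (so that they genuinely contribute $6$ cones each and the subtraction is $12$, not something smaller), and the cross-check that \cite[Theorem~7.14]{HL22}'s symmetric classification for type $\mathscr{T}$ is consistent with the corrected surface count of $129$ — one must be sure the removed models were not among the symmetric ones. This is a finite, explicit check: the cases $r_1 = 9, r_2 = 8$ sit in the $n_1 = 2$, $n_2 \leq -1$ branch with $n_1 \neq n_2$ and distinct component data, hence carry no nontrivial symmetry, so they contribute full orbits of length $6$. Once this is confirmed, the arithmetic $a\cdot 1 + b\cdot 2 + c\cdot 3 + (129-a-b-c)\cdot 6 = 741$ closes the proof.
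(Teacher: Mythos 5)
Your proposal is correct and follows essentially the same route as the paper: the paper's proof likewise takes the corrected total of $129$ models, notes that $11$ of them are symmetric with orbit length $3$ (per \cite[Theorem~7.14]{HL22}, where $a=b=0$ and $c=11$ in your notation), and computes $118\times 6+11\times 3=741$. Your additional cross-check that the two deleted cases $r_1=9$, $r_2=8$ would have been asymmetric (so the total drops by exactly $12$ from the erroneous count) is a sensible consistency verification not spelled out in the paper, but it is the same bookkeeping argument.
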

\begin{proof}
There are now 129 models of type $\mathscr{T}$ of which 11 are symmetric (and have orbit length 3). This give $118 \times 6 + 11 \times 3= 747$ maximal cones. 
\end{proof}

Combining these results finally gives us
\begin{theorem}\label{finalsum}
Let $\sY \to S$ be a model of the Dolgachev-Nikulin-Voisin family of degree $2$. Then the Mori fan $\Morifan(\sY/S)$ has $3398$ maximal cones. Of these $741$ are of type $\mathscr{T}$
and $2657$ are of type $\mathscr{P}$.
\end{theorem}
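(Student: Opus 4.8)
The plan is to assemble the two partial counts already obtained and invoke the structural dichotomy from \cite{HL22}. First I would recall that, by the classification in \cite{HL22}, every maximal cone of $\Morifan(\sY/S)$ is of exactly one of the two types $\mathscr{T}$ or $\mathscr{P}$, and that the two families are disjoint; this is the only input from the original paper that is needed here and is unaffected by the enumeration errors being corrected. Granting this, the count of maximal cones is simply the sum of the counts within each type.

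Next I would quote Theorem~\ref{maxconesT}, which gives $741$ maximal cones of type $\mathscr{T}$ (this in turn rests on the corrected figure of $129$ models of type $\mathscr{T}$ from Theorem~\ref{modelsT}, eleven of which are symmetric with orbit length $3$, so that $118\times 6+11\times 3=741$). Then I would quote Corollary~\ref{cor:maximalconesPP}, which gives $2657$ maximal cones of type $\mathscr{P}$ (resting on the corrected total of $450$ models of type $\mathscr{P}$ from Theorem~\ref{the:allmodelsP} together with the corrected list of symmetric models in Proposition~\ref{por::symmetricmodelsP}, so that $1\times 1+2\times 2+10\times 3+437\times 6=2657$).

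Finally, adding the two contributions, $741+2657=3398$, which is the asserted total, with the stated breakdown by type following immediately from the disjointness of the two families.

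The only genuine content is the bookkeeping carried out in the preceding sections; here there is no real obstacle, the one point requiring care being to confirm that the $\mathscr{T}$/$\mathscr{P}$ dichotomy partitions the set of maximal cones without overlap or omission, so that the two subtotals may legitimately be added. Since that partition is part of the main theoretical framework of \cite{HL22}, which the corrigendum explicitly does not alter, the summation is justified and the proof concludes.
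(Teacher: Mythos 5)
Your proposal is correct and follows exactly the paper's own route: the theorem is obtained simply by adding the type-$\mathscr{T}$ count from Theorem~\ref{maxconesT} and the type-$\mathscr{P}$ count from Corollary~\ref{cor:maximalconesPP}, using the $\mathscr{T}$/$\mathscr{P}$ dichotomy from \cite{HL22} to justify the summation. As a minor bonus, your arithmetic $118\times 6+11\times 3=741$ is the correct one (the printed proof of Theorem~\ref{maxconesT} contains a typo reading $747$).
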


%-----------------------------------------------------

 % contains math

%-----------------------------------------------------
\bibliography{literatur}
\bibliographystyle{alpha}
%-----------------------------------------------------
\end{document}